\documentclass[12pt]{article}%
\usepackage{graphicx}
\usepackage{amsmath}
\usepackage{amsfonts}
\usepackage{amssymb}%
\setcounter{MaxMatrixCols}{30}
\providecommand{\U}[1]{\protect\rule{.1in}{.1in}}
\newtheorem{theorem}{Theorem}
\newtheorem{acknowledgement}[theorem]{Acknowledgement}

\newtheorem{conjecture}[theorem]{Conjecture}

\newtheorem{definition}[theorem]{Definition}

\newenvironment{proof}[1][Proof]{\noindent\textbf{#1.} }{\ \rule{0.5em}{0.5em}}
\begin{document}

\title{\textbf{Topological Tverberg Theorem: }\\\textbf{the proofs and the counterexamples\footnotetext{Part of this work has
been carried out in the framework of the Labex Archimede (ANR-11-LABX-0033)
and of the A*MIDEX project (ANR-11-IDEX-0001-02), funded by the
\textquotedblleft Investissements d'Avenir" French Government programme
managed by the French National Research Agency (ANR). Part of this work has
been carried out at IITP RAS. The support of Russian Foundation for Sciences
(project No. 14-50-00150) is gratefully acknowledged.}\thanks{ This paper
clarifies some bibliographical issues in the review paper \cite{S} in this
volume, which clarification I was entrusted to make by the Editorial Board of
Russian Math. Surveys.}}}
\author{Senya Shlosman\\Skolkovo Institute of Science and Technology, \\Moscow, Russia;\\Aix Marseille University, University of Toulon, \\CNRS, CPT, Marseille, France;\\Inst. of the Information Transmission Problems,\\RAS, Moscow, Russia.\\S.Shlosman@skoltech.ru, senya.shlosman@univ-amu.fr, \\shlos@iitp.ru}
\maketitle

\begin{abstract}
I describe the history of Topological Tverberg Theorem (TTT). I present some
important constructions and discuss their properties. In particular, I
describe in details the cell structure of the classifying space $K\left(
S_{r},1\right)  ,$ where $S_{r}$ is the permutation group.

\end{abstract}

\section{My initiation to TTT}

My introduction to TTT took place in the Spring of 1980, in the student menza
of MSU (Zone B). During lunch there, Imre Barany was telling me about his
success with the topological version of Radon Theorem (TRT). The
two-dimensional (for simplicity) version of TRT claims the following. Let
$\Delta_{3}$ be a 3D simplex. A pair of faces $\Delta^{\prime},\Delta
^{\prime\prime}\subset\partial\Delta_{3}$ is called complimentary, if
$\Delta^{\prime}\cap\Delta^{\prime\prime}=\varnothing,$ and $\dim\left(
\Delta^{\prime}\right)  +\dim\left(  \Delta^{\prime\prime}\right)  =2.$ For
example, $\Delta^{\prime}$ can be a 2D face and $\Delta^{\prime\prime}$ -- the
opposite vertex ($\equiv$ 0D simplex), the other possibility is that
$\Delta^{\prime},\Delta^{\prime\prime}$ is a pair of skew edges.

\begin{theorem}
TRT Let $f:\partial\Delta_{3}\rightarrow R^{2}$ be a continuous map. Then for
some pair $\Delta^{\prime},\Delta^{\prime\prime}$ of complimentary faces of
$\Delta_{3}$ one has
\[
f\left(  \Delta^{\prime}\right)  \cap f\left(  \Delta^{\prime\prime}\right)
\neq\varnothing.
\]

\end{theorem}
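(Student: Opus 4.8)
The plan is to deduce this statement from the Borsuk--Ulam theorem, via the standard configuration-space/test-map scheme. First I would dispose of the boundary hypothesis: since the conclusion only involves images of \emph{proper} faces of $\Delta_3$, I may extend $f$ arbitrarily to a continuous map $\hat f:\Delta_3\to\mathbb{R}^2$ (for instance by coning from the barycenter), and it suffices to produce a complementary pair $\Delta',\Delta''$ with $\hat f(\Delta')\cap\hat f(\Delta'')\neq\varnothing$, because such a pair automatically consists of proper faces, on which $\hat f=f$.

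Next I would bring in the relevant free $\mathbb{Z}/2$-space. Label the vertices of $\Delta_3$ by $\{1,2,3,4\}$ and let $S=\{x\in\mathbb{R}^4:\ \|x\|_1=1\}$, the boundary of the $4$-dimensional cross-polytope; it is homeomorphic to $S^3$ and the antipodal map $x\mapsto-x$ acts freely on it. For $x\in S$ write $x=x^{+}-x^{-}$ with $x^{+},x^{-}\geq0$ of disjoint supports, put $a=\|x^{+}\|_1$ and $b=\|x^{-}\|_1$ (so $a+b=1$), and define
\[
\phi(x)=\Bigl(a\,\hat f\bigl(x^{+}/a\bigr)-b\,\hat f\bigl(x^{-}/b\bigr),\ a-b\Bigr)\in\mathbb{R}^2\times\mathbb{R}=\mathbb{R}^3,
\]
with the convention that $a\,\hat f(x^{+}/a)$ is read as $0$ when $a=0$, and symmetrically for $b=0$. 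The points to check here: $x^{+}/a$ and $x^{-}/b$, where defined, are probability vectors on $\{1,2,3,4\}$, hence points of $\Delta_3$ where $\hat f$ makes sense; $\phi$ is continuous, the only delicate spots being the strata $a=0$ or $b=0$, where continuity holds because $\hat f$ is bounded on the compact set $\Delta_3$, so the vanishing scalar absorbs the (possibly discontinuous) argument; and $\phi$ is odd, $\phi(-x)=-\phi(x)$, which is immediate because negating $x$ interchanges $x^{+}\leftrightarrow x^{-}$ and $a\leftrightarrow b$.

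Now I would invoke the Borsuk--Ulam theorem in the form: every continuous odd map $S^3\to\mathbb{R}^3$ has a zero. Hence there is $x_0\in S$ with $\phi(x_0)=0$. The last coordinate forces $a=b=\tfrac12$, so $x_0^{+}$ and $x_0^{-}$ are both nonzero, with disjoint supports $A,B\subseteq\{1,2,3,4\}$ that are both nonempty, hence both \emph{proper}; the first two coordinates give $\hat f(x_0^{+}/a)=\hat f(x_0^{-}/b)$, i.e.\ a common value of $\hat f$ on $\mathrm{conv}\{v_i:i\in A\}$ and $\mathrm{conv}\{v_j:j\in B\}$, and on proper faces $\hat f=f$. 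Finally I would enlarge $A\subseteq A'$, $B\subseteq B'$ to a partition $A'\sqcup B'=\{1,2,3,4\}$ by distributing the remaining vertices arbitrarily; then $\Delta'=\mathrm{conv}\{v_i:i\in A'\}$ and $\Delta''=\mathrm{conv}\{v_j:j\in B'\}$ form a complementary pair with $f(\Delta')\cap f(\Delta'')\neq\varnothing$.

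The main obstacle is not any single step but getting the test map $\phi$ exactly right: one must include the ``height'' coordinate $a-b$ so that the target dimension equals $3$ and matches $S^3$ (dropping it would make the dimension count fail, since the deleted join of $\partial\Delta_3$ alone is only $S^2$-like), verify continuity across the lower-dimensional strata of $S$, and make sure the zero $x_0$ lands on proper faces so that the extension $\hat f$ can be traded back for $f$. Once these are in place the argument is short, and the very same scheme, with $S^3$ replaced by $S^{d+1}$ (the $\ell^1$-sphere in $\mathbb{R}^{d+2}$) and $\mathbb{R}^3$ by $\mathbb{R}^{d+1}$, yields the general $d$-dimensional topological Radon theorem.
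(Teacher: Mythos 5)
Your proof is correct, but it takes a genuinely different route from the paper's. The paper works with the \emph{deleted product} idea: it builds an explicit equivariant map $F:\mathbb{S}^{2}\rightarrow\partial\Delta_{3}$ (modelled on the cuboctahedron, which is the complex $Y_{3,2}$ of pairs of points in disjoint faces), chosen so that every pair of antipodal points lands in a pair of complementary faces, and then applies Borsuk--Ulam directly to $f\circ F:\mathbb{S}^{2}\rightarrow\mathbb{R}^{2}$. You instead use the \emph{deleted join} scheme: the $\ell^{1}$-sphere $S\cong\mathbb{S}^{3}$ is the two-fold deleted join of $\Delta_{3}$, and the test map $\phi:S\rightarrow\mathbb{R}^{3}$ carries the extra ``balance'' coordinate $a-b$ precisely so that the dimensions match and a single application of Borsuk--Ulam for odd maps $\mathbb{S}^{3}\rightarrow\mathbb{R}^{3}$ finishes the job. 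Both arguments are sound. The paper's version is one dimension lower and is deliberately concrete, since the cuboctahedron is the $r=2$, $N=3$ case of the cell complexes $Y_{N,r}$ that are the paper's main object of study; the price is that the construction of $F$ is ad hoc and does not scale transparently. Your version is the now-standard configuration-space/test-map argument: it requires an auxiliary extension $\hat f$ to the interior and uses a sphere of one higher dimension, but it is shorter, avoids building $F$ by hand, and (as you note) generalizes verbatim to topological Radon in every dimension. Your handling of the delicate points --- continuity at the strata $a=0$ or $b=0$, the oddness of $\phi$, the observation that $a=b=\tfrac12$ forces both supports to be nonempty hence proper, and the final enlargement of $A,B$ to a partition --- is careful and complete.
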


My immediate reaction to this statement was that it should follow from the
Borsuk--Ulam Theorem (BUT) about antipodal points: BUT claims that if
$g:\mathbb{S}^{2}\rightarrow\mathbb{R}^{2}$ is a continuous map, then for some
pair $x^{\prime},x^{\prime\prime}\in\mathbb{S}^{2}$ of antipodal points we
have $g\left(  x^{\prime}\right)  =g\left(  x^{\prime\prime}\right)  .$
Indeed, the two statements look similar; the only step needed, in order to
relate the two, is to find a `universal' map $F:\mathbb{S}^{2}\rightarrow
\partial\Delta_{3}$ such that the images of each pair of antipodal points
belong to the complimentary faces of $\partial\Delta_{3}.$

To find such a map is not hard, and here it is. First, let us replace the
sphere by the cuboctahedron, $\mathcal{C}$:

\begin{figure}[h]
\centering
\includegraphics[scale=0.4]{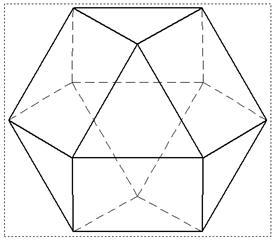}\end{figure}
Note that $\mathcal{C}$ has 8 triangles $\delta_{1},...,\delta_{8}$ and 6
squares, i.e. 14 faces in total, while $\partial\Delta_{3}$ has 4 triangles
$d_{1},...,d_{4},$ 4 vertices $v_{1},...,v_{4},$ and 6 edges, which makes
again 14 items. So let us select 4 triangles out of 8 in $\mathcal{C}$ in such
a way that each two are disjoint -- say, $\delta_{1}$ to $\delta_{4}$ -- and
define $F$ on these triangles to be linear isometries to, resp.,
$d_{1},...,d_{4}.$ Supposing that the faces $\delta_{5},...,\delta_{8}$ are
opposite to $\delta_{1},...,\delta_{4}$ and vertices $v_{1},...,v_{4}$
disjoint with faces $d_{1},...,d_{4},$ we define $F\left(  \delta_{k}\right)
\equiv v_{k-4}$ for $k=5,...,8.$ It remains to continue $F$ to the six
squares. But $F$ is defined already on their boundaries, sending each boundary
onto a corresponding edge. So we continue $F$ inside each square in such a way
that it takes the whole square into that same edge.

The existence of $F$ thus proves TRT (in 2D).

Imre also told me about his project to generalize the Linear Tvebrerg Theorem
(LTT) to Topological Tvebrerg Theorem, and we decided to join our efforts.
Shortly, Andras Szucs joined our team. Our goal was to prove the

\begin{conjecture}
\textbf{TTT}: Let $f:\partial\Delta_{(d+1)(r-1)}\rightarrow\mathbb{R}^{d}$ be
continuous. Then for some collection $\Delta^{1},...,\Delta^{r}\subset
\partial\Delta_{(d+1)(r-1)}$ of complimentary subsimplices we have
\[
f\left(  \Delta^{1}\right)  \cap...\cap f\left(  \Delta^{r}\right)
\neq\varnothing.
\]

\end{conjecture}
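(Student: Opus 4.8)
The natural approach is the \emph{configuration space / test map} scheme, applied to the $r$-fold deleted join (this is the 1981 argument of Barany, Shlosman and Szucs); it proves the statement when $r$ is a prime power, and I will explain why it --- and, worse, the statement itself --- fails otherwise. Write $N=(d+1)(r-1)$ and first extend the given map $f\colon\partial\Delta_{N}\to\mathbb{R}^{d}$ to a continuous $\bar f\colon\Delta_{N}\to\mathbb{R}^{d}$ (for instance radially from an interior point). Since $r\geq2$, any $r$ pairwise disjoint faces of $\Delta_{N}$ are automatically proper and hence lie in $\partial\Delta_{N}$, so it suffices to produce $r$ pairwise disjoint faces $\Delta^{1},\dots,\Delta^{r}$ with $\bar f(\Delta^{1})\cap\dots\cap\bar f(\Delta^{r})\neq\varnothing$; I argue by contradiction, assuming no such collection exists.

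Consider the $r$-fold deleted join
\[
E=(\Delta_{N})^{*r}_{\Delta}=\Bigl\{\lambda_{1}x_{1}+\dots+\lambda_{r}x_{r}\ :\ \lambda_{j}\geq0,\ \textstyle\sum_{j}\lambda_{j}=1,\ x_{j}\in\Delta_{N},\ \mathrm{supp}(x_{i})\cap\mathrm{supp}(x_{j})=\varnothing\ (i\neq j)\Bigr\}.
\]
Because each of the $N+1$ vertices of $\Delta_{N}$ is claimed by at most one copy, $E$ is canonically the $(N+1)$-fold join $[r]^{*(N+1)}$ of an $r$-element set; hence $E$ is an $N$-dimensional complex that is $(N-1)$-connected, and $S_{r}$ acts on it by permuting the copies --- freely on each cyclic subgroup $\mathbb{Z}/p$ (for $r=p$ prime, acting on $[r]$ by the regular representation). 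Now form the test map from $\bar f$: send $\lambda_{1}x_{1}+\dots+\lambda_{r}x_{r}$ to the point of $(\mathbb{R}^{d+1})^{r}$ whose $j$-th block is $\lambda_{j}\bigl(1,\bar f(x_{j})\bigr)$ (equal to $0$ when $\lambda_{j}=0$, which is continuous), and subtract the average of the $r$ blocks to land in $W_{r}^{\oplus(d+1)}$, where $W_{r}=\{y\in\mathbb{R}^{r}:\sum_{i}y_{i}=0\}$ is the standard $(r-1)$-dimensional $S_{r}$-module. Reading off first coordinates, a zero of this $S_{r}$-map forces all $\lambda_{j}=1/r$, and then the remaining coordinates force $\bar f(x_{1})=\dots=\bar f(x_{r})$ with the $x_{j}$ lying in pairwise disjoint faces (by the definition of $E$) --- exactly the excluded configuration. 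So the map is nowhere zero, and after normalization we obtain an $S_{r}$-equivariant map
\[
g\colon E\longrightarrow S\bigl(W_{r}^{\oplus(d+1)}\bigr),\qquad \dim S\bigl(W_{r}^{\oplus(d+1)}\bigr)=(d+1)(r-1)-1=N-1.
\]

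For $r=p$ prime I would now finish by restriction to $\mathbb{Z}/p\subset S_{r}$: it acts freely on the $(N-1)$-connected complex $E$ and freely on the $(N-1)$-dimensional sphere $S(W_{r}^{\oplus(d+1)})$ (a point of $W_{p}^{\oplus(d+1)}$ fixed by a nontrivial cyclic shift has every block constant, hence zero). Dold's theorem --- the Borsuk--Ulam-type statement that there is no $G$-map from an $n$-connected space to a free $G$-CW-complex of dimension at most $n$ --- then contradicts the existence of $g$, proving the conjecture for $r$ prime. For $r=p^{k}$ a prime power one runs the same scheme, restricting instead to an elementary abelian $(\mathbb{Z}/p)^{k}\hookrightarrow S_{r}$ acting by its regular representation; the action on the target sphere is no longer free, but a finer cohomological (Euler-class) argument of \"{O}zaydin shows the equivariant obstruction still survives, so TTT holds for every prime power $r$.

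The step I expect to be --- and which genuinely is --- the fatal obstacle is the case of $r$ \emph{not} a prime power. Since no elementary abelian $p$-group acts transitively on a set of non-prime-power size, every elementary abelian subgroup of $S_{r}$ fixes a nonzero subspace of $W_{r}^{\oplus(d+1)}$, so no Dold-type freeness argument is available; and in fact \"{O}zaydin proved that for such $r$ the equivariant map $g$ \emph{does} exist, so the test-map scheme cannot succeed. Nor can the gap be closed combinatorially: combining the $r$-fold Whitney trick of Mabillard and Wagner with Frick's reduction produces genuine \emph{counterexamples} to the statement as written (the first, for example, with $r=6$ and $d=19$). Thus --- as this paper's title already announces --- the most this plan can deliver is a theorem for $r$ a prime power, together with a disproof of the conjecture in general.
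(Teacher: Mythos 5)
Your proposal is, in substance, exactly the story this paper tells: TTT is stated here only as a \emph{Conjecture}, and the paper never proves it, because it is false in general. What the paper actually does is narrate the history --- it was proved for $r$ prime by the configuration-space/test-map scheme in \cite{BSS}, extended to prime powers by \"Ozaydin \cite{O} (independently Volovikov \cite{V}), and disproved for every $r$ that is not a prime power by Frick \cite{F}, building on Mabillard and Wagner. You reproduce this three-part arc faithfully and correctly identify why the freeness argument collapses for composite non-prime-power $r$ and why \"Ozaydin's theorem blocks any rescue of the test-map scheme.

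The one genuine technical difference is worth flagging. The paper (and \cite{BSS}) works with the \emph{deleted product}: the complex $Y_{N,r}$ of $r$-tuples in pairwise disjoint faces of $\Delta_N$, which has dimension $N-r+1=d(r-1)$, is $(d(r-1)-1)$-connected, and is compared against the sphere $S\bigl(W_r^{\oplus d}\bigr)$ of dimension $d(r-1)-1$. You instead use the $r$-fold \emph{deleted join} $E=(\Delta_N)^{*r}_\Delta\cong[r]^{*(N+1)}$ of dimension $N$, which is $(N-1)$-connected, mapping to $S\bigl(W_r^{\oplus(d+1)}\bigr)$ of dimension $N-1$; the extra $W_r$ summand absorbs the join coordinates $\lambda_j$. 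These are the two standard, interchangeable formalizations of the same obstruction, and both feed into the same Dold-type theorem in the prime case. Your version is the one that has become standard in the later literature (Sarkaria, Matou\v{s}ek), whereas the paper's $Y_{N,r}$ has the bonus of a small prism cell structure that the author goes on to exploit in the $\mathcal{O}$-orientability discussion. Either way, your plan is sound as a plan, and your conclusion --- that the conjecture as stated is false and the best one can salvage is the prime-power case --- is precisely the paper's own.
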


(LTT is the same statement, but for $f$ linear. It was proven by Tverberg in
\cite{T}.)

Our plan was somewhat similar to the above. For a simplex $\Delta_{N}$ we have
introduced the complex $Y_{N,r}$ of $r$-tuples $\left(  y_{1},...,y_{r}%
\right)  \subset\Delta_{N}$, where the points $y_{1},...,y_{r}$ lie in
disjoint faces of $\Delta_{N}.$ $Y_{N,r}$\textbf{ }was going to play the role
of cuboctahedron\textbf{ }$\mathcal{C}.$ That turned out to be a very nice CW
complex, see below. It has an evident free action of the cyclic group
$\mathbb{Z}_{r}.$ We were using the cyclic action $\mathbb{Z}_{r}$ on
$\mathbb{R}^{nr},$ which sends $\left(  w_{1},...,w_{r}\right)  $ to $\left(
w_{2},,...,w_{r},w_{1}\right)  $, $w_{i}\in\mathbb{R}^{n}.$ We needed it to be
free action away from the diagonal $\mathbb{R}^{n}\subset\mathbb{R}^{nr},$
comprised by the vectors $\left(  w,...,w\right)  \in\mathbb{R}^{nr},$
$w\in\mathbb{R}^{n}$ (the antipodal map above corresponds to $\mathbb{R}%
^{2}\subset\mathbb{R}^{4}$). That, unfortunately, happens only when $r$ is
prime. That restricted our proof of TTT in \cite{BSS} to the case of prime
$r$-s only, but I thought at that time that the primality obstacle will be of
restricted temporal nature.

\section{The CW complex $Y_{N,r}$}

The complex $Y_{N,r}$ has several nice properties. First, not only the cyclic
group $\mathbb{Z}_{r},$ but the whole permutation group $S_{r}$ acts freely on
it. Next, all the homotopy groups $\pi_{k}\left(  Y_{N,r}\right)  =0$ for
$k\leq N-r,$ see \cite{BSS}. The first nontrivial homotopy group is
$\pi_{N-r+1}\left(  Y_{N,r}\right)  ,$ while the dimension $\dim\left(
Y_{N,r}\right)  $ is also $N-r+1.$ That means that $Y_{N,r}$ is homotopy
equivalent to the $\left(  N-r+1\right)  $-dimensional skeleton of the
universal cover of the classifying complex $K\left(  S_{r},1\right)  .$ In
other words, $\left[  K\left(  S_{r},1\right)  \right]  _{N-r+1}=Y_{N,r}%
/S_{r}.$ This is the most economical model of the skeleton of $K\left(
S_{r},1\right)  .$

For example, for $r=2$ we have $K\left(  S_{r},1\right)  =K\left(
\mathbb{Z}_{2},1\right)  =\mathbb{RP}^{\infty},$\textbf{ }and so the complex
$Y_{N,2}$ has a homotopy type of the sphere\textbf{ }$\mathbb{S}^{N-1}%
.$\textbf{ }For $N=3$ we get\textbf{ }$\mathbb{S}^{2},$\textbf{ }with the cell
structure of the cuboctahedron\textbf{ }$\mathcal{C}.$ When $N=2$ the
complex\textbf{ }$Y_{2,2}\sim\mathbb{S}^{1}$ is a hexagon, with edges $\left(
0\right)  \left(  1,2\right)  ,$ $\left(  1\right)  \left(  2,0\right)  ,$
$\left(  2\right)  \left(  0,1\right)  ,$ $\left(  0,1\right)  \left(
2\right)  ,$ $\left(  1,2\right)  \left(  0\right)  $ and $\left(  2,0\right)
\left(  1\right)  .$

The CW complexes $Y_{N,r}$ come with some extra structure. Namely, every cell
$c\subset Y_{N,r}$ is equipped with the structure of a product, $c=\Delta
_{d_{1}}\times...\times\Delta_{d_{r}},$ where $\Delta_{d}$ denotes the
$d$-dimensional simplex, $d\geq0,$ with $d_{1}+...+d_{r}=\dim c.$ (In
particular, every sphere gets such a cell structure, each cell having this
product structure.) LWe will call such a CW complex a \textit{prism }complex.

Together with Oleg Ogievetsky we spent many happy hours attempting to prove
TTT for all values of $r.$ That was a very exiting experience; we saw many
beautiful questions related to TTT and we invented several nice constructions.
I will present one such invention: the concept of $\mathcal{O}$-orientation.

Before explaining it I will talk about the classic orientability. One way of
saying the prism complex $Y$ to be orientable is to demand that

\begin{itemize}
\item every cell $c$ of dimension $\dim Y-1$ belongs to the boundaries of
exactly two cells $c^{\prime},c^{\prime\prime}$ of dimension $\dim Y;$

\item every cell of dimension $\dim Y$ can be assign an orientation in such a
way that each cell $c$ of dimension $\dim Y-1$ inherits the opposite
orientations from its two parent cells $c^{\prime},c^{\prime\prime}.$
\end{itemize}

For example, the complexes $Y_{N,2}=\mathbb{S}^{N-1}$ are orientable.

The complexes $Y_{N,r}$ are not orientable in the above sense once $r>2.$ The
reason is simple: its faces of codimension 1 belong to $r>2$ faces of full
dimension. Yet, for our experiments we needed some canonical way of choosing
the orientations of its $\left(  N-r+1\right)  $-dimensional faces of top
dimension. The natural way of doing it is to use the $\mathcal{O}%
$-\textit{orientability} of $Y_{N,r}$:

\begin{definition}
Let $Y$ be a CW prism complex of dimension $d.$ It is called $\mathcal{O}%
$-orientable, if there is a choice of orientations of all its $d$-cells, such
that each cell of $Y$ of codimension $1$ inherits the \textbf{same}
orientation from all $d$-dimensional cells to which it is incident.
\end{definition}

Unlike orientability, the $\mathcal{O}$-orientability depends not only on the
homotopy type of $Y,$ but also on the prism cell structure. Some
triangulations of the sphere $\mathbb{S}^{2}$ are not $\mathcal{O}%
$-orientable. On the other hand, some triangulations of the Mobius strip are
$\mathcal{O}$-orientable.

\begin{theorem}
\cite{OS} The complex $Y_{N,r},$ with the prism structure described, is
$\mathcal{O}$-orientable.
\end{theorem}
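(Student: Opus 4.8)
The plan is to exhibit an explicit rule that assigns an orientation to every top-dimensional cell of $Y_{N,r}$ and then to verify that any codimension-$1$ cell inherits one and the same orientation from each top cell containing it. Recall that a point of $Y_{N,r}$ is an $r$-tuple $(y_1,\dots,y_r)$ with the $y_i$ lying in pairwise disjoint faces of $\Delta_N$; a cell $c$ is determined by an ordered collection of pairwise disjoint nonempty subsets $A_1,\dots,A_r$ of the vertex set $\{0,1,\dots,N\}$ (with $A_i$ possibly "empty" in the sense that the corresponding $y_i$ is a formally adjoined point, depending on the precise convention of \cite{BSS}), and $c=\Delta_{d_1}\times\cdots\times\Delta_{d_r}$ with $d_i=|A_i|-1$. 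A top-dimensional cell has $\sum d_i=N-r+1$, i.e. $\sum|A_i|=N+1$, so the $A_i$ form an ordered partition of the whole vertex set into $r$ nonempty blocks. Since each block $A_i$ is a subset of the \emph{linearly ordered} set $\{0,\dots,N\}$, the simplex $\Delta_{d_i}$ carries a canonical orientation (the order of its vertices), and I orient the product cell $c=\Delta_{d_1}\times\cdots\times\Delta_{d_r}$ by the product of these canonical orientations, in the index order $1,2,\dots,r$. This is the candidate $\mathcal O$-orientation.

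Next I would enumerate the codimension-$1$ cells and the top cells incident to each. A codimension-$1$ cell $c'$ corresponds to an ordered tuple of pairwise disjoint blocks $B_1,\dots,B_r$ with $\sum|B_i|=N$, i.e. exactly one vertex $v$ of $\{0,\dots,N\}$ is missing. A top cell $c$ has $c'$ in its boundary precisely when $c$ is obtained by inserting $v$ into one of the blocks — say into $B_j$ — to get $A_j=B_j\cup\{v\}$, all other blocks unchanged. So the top cells incident to $c'$ are indexed by the choice $j\in\{1,\dots,r\}$ of which block absorbs $v$ (when $v$ can legitimately be placed in each; boundary conventions at "empty" slots must be checked but behave the same way). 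The key computation is then the sign with which the boundary face $c'$ appears in $\partial c$ for the cell $c=c^{(j)}$: this is a product of (i) the Koszul sign $(-1)^{d_1+\cdots+d_{j-1}}$ coming from commuting the boundary operator past the first $j-1$ factors of the product cell, times (ii) the sign with which the face $\Delta_{|B_j|-1}$ (omitting $v$) appears in $\partial\Delta_{|B_j|}$, which is $(-1)^{p}$ where $p$ is the position of $v$ in the ordered block $A_j$. The claim to verify is that the product of these two signs, compared against the fixed product orientation on $c'$, is independent of $j$.

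The main obstacle — and the heart of the argument — is precisely this independence-of-$j$ check, i.e. showing the two $j$-dependent sign contributions conspire to cancel their $j$-dependence. The mechanism I expect: when $v$ moves from block $B_j$ to block $B_{j+1}$, the Koszul prefactor changes by $(-1)^{d_j}$ where now $d_j=|B_j|-1$ (the dimension of the $j$-th factor \emph{without} $v$, since in cell $c^{(j+1)}$ the $j$-th block is just $B_j$), while the internal face sign changes in a controlled way governed by how many elements of $B_{j}$ precede, versus how many elements of $B_{j+1}$ precede, the inserted vertex $v$ in the global order on $\{0,\dots,N\}$. Tracking these against the global position of $v$ among the $N$ vertices present in $c'$ should make everything telescope. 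Concretely, I would set up a single integer — the position of $v$ within the ordered set $\{0,\dots,N\}\setminus(\text{nothing})$ relative to the blocks of $c'$ — and show each incident top cell induces on $c'$ the orientation $(-1)^{(\text{that position})}$ times the product orientation of $c'$, uniformly in $j$. Once this is established for all codimension-$1$ cells, the definition of $\mathcal O$-orientability is met and the theorem follows. (A useful sanity check along the way: specialize to $r=2$, where $Y_{N,2}\simeq\mathbb S^{N-1}$ and $\mathcal O$-orientability coincides with ordinary orientability, and to $N=2,r=2$, the hexagon, where the six edges and their two incident $2$-cells can be matched by hand against the formula.)
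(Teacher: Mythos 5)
Your overall strategy --- exhibit an explicit orientation of the top cells and then check by a boundary/Leibniz computation that every codimension-one cell inherits the same orientation from all its parents --- is exactly the paper's strategy. The difficulty is that the specific rule you propose, namely orienting each block by the ambient order $0<1<\cdots<N$ and taking the plain product of those orientations, does \emph{not} give an $\mathcal O$-orientation of $Y_{N,r}$. The independence-of-$j$ claim that you flag as ``the heart of the argument'' and defer is in fact false for that rule.

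Here is a concrete failure already in the cuboctahedron $Y_{3,2}$. Take the codimension-one cell $c'=(\{1\},\{0,2\})$, corresponding to the missing vertex $3$. Its two incident top cells are $F=(\{1\},\{0,2,3\})=\Delta(1)\times\Delta(0,2,3)$ and $F'=(\{1,3\},\{0,2\})=\Delta(1,3)\times\Delta(0,2)$. With your rule both carry the ascending-order orientations on their factors. Computing boundaries:
\[
\partial F=\Delta(1)\times\bigl[\Delta(2,3)-\Delta(0,3)+\Delta(0,2)\bigr],
\]
so $c'=\Delta(1)\times\Delta(0,2)$ appears with coefficient $+1$; while
\[
\partial F'=\bigl[\Delta(3)-\Delta(1)\bigr]\times\Delta(0,2)-\Delta(1,3)\times\bigl[\Delta(2)-\Delta(0)\bigr],
\]
so $c'$ appears with coefficient $-1$. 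The two inherited orientations are \emph{opposite}, violating the defining property of an $\mathcal O$-orientation. (And it is not merely an orientation in the opposite convention either: the same rule gives coefficient $-1$ to $(\{1\},\{0,3\})$ in both $\partial F$ and $\partial(\{1,2\},\{0,3\})$, so those two agree --- the naive product rule is globally neither an orientation nor an $\mathcal O$-orientation.) The proposed telescoping mechanism, where the Koszul sign and the internal face sign were to cancel each other's $j$-dependence and yield $(-1)^{\mathrm{pos}(v)}$ uniformly, simply does not happen.

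What the paper does instead is prescribe, on top of the ascending-order product, an extra sign for each top cell $F=F(V_1,\dots,V_r)$. One forms the length-$(N+r)$ string $S(F)$ by writing the blocks in ascending order separated by formal markers $s_1,\dots,s_{r-1}$, and compares it to the reference string $S(F_0)=v_0,s_1,v_1,s_2,\dots,s_{r-1},v_{r-1},v_r,\dots,v_N$; the orientation of $F$ is the product one if the permutation carrying $S(F)$ to $S(F_0)$ is even, and the opposite otherwise. In the example above, $F$ gets the sign $-1$ (the permutation $1,s_1,0,2,3\mapsto 0,s_1,1,2,3$ is a transposition) while $F'$ gets the sign $+1$ (the permutation $1,3,s_1,0,2\mapsto 0,s_1,1,2,3$ is a $5$-cycle, hence even), and now both boundaries give $c'$ the coefficient $-1$. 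Without this permutation-parity correction the rest of your verification cannot go through, so this is a genuine gap rather than a stylistic difference.
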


\begin{proof}
Let us consider a face (=cell) $F=F\left(  V_{1},...,V_{r}\right)  $ of
$Y^{N,r}$ having full dimension. It corresponds to the ordered partition
$V_{1},...,V_{r}$ of the vertex set $\left\{  v_{0},v_{1},...v_{j}%
,...,v_{N}\right\}  $ of the simplex $\Delta_{N}$ into $r$ disjoint non-empty
subsets. In that case $F$ is the product of simplices, $F=\Delta\left(
V_{1}\right)  \times...\times\Delta\left(  V_{r}\right)  ,$ so to fix the
orientation of $F$ is the same as to fix the orientations on each simplex
$\Delta\left(  V_{i}\right)  $ and to take them in the order given. In case
some $V$ is a singleton, this orientation turns into a sign, $\pm1.$ The
orientation of the simplex $\Delta\left(  v_{i_{0}},...,v_{i_{k}}\right)  $ is
specified by fixing some total order $\overrightarrow{v_{i_{0}},...,v_{i_{k}}%
}$ on the set $v_{i_{0}},...,v_{i_{k}}$ of its vertices. For a
zero-dimensional oriented simplex corresponding to a vertex $v$ of $\Delta
_{N}$ we will use the notation $\Delta\left(  \overset{\pm}{v}\right)  .$

Likewise, a face $G$ of codimension 1 of $Y^{N,r},$ which belongs to the
boundary of the face $F,$ corresponds to the ordered partition $V_{1}^{\prime
},...,V_{r}^{\prime}$ of the vertex set $\left\{  v_{0},v_{1},...,\widehat
{v_{j}}^{j},...,v_{N}\right\}  $ into $r$ disjoint non-empty subsets, where
the operation $\widehat{\mathbf{\ast}}^{j}$ means the removal of the $j$-th
term, $j=0,1,...$ . So all the elements of the partition $V_{1}^{\prime
},...,V_{r}^{\prime}$ except one, say, $V_{k}^{\prime},$ coincide with the
subsets $V_{1},...,V_{r},$ while $V_{k}^{\prime}=V_{k}\setminus v^{G},$
$v^{G}\in\left\{  v_{0},v_{1},...v_{j},...,v_{N}\right\}  .$ So,
$G=\Delta\left(  V_{1}\right)  \times...\times\Delta\left(  V_{k-1}\right)
\times\Delta\left(  V_{k}^{\prime}\right)  \times...\Delta\left(
V_{r}\right)  ,$ and $\Delta\left(  V_{k}^{\prime}\right)  \in\partial
\Delta\left(  V_{k}\right)  .$

Now we remind the reader the standard definition of the inherited orientation.
The orientation of the simplex $\Delta\left(  v_{i_{0}},...,v_{i_{s}}\right)
$ induces the orientations of all its faces according to the formula
\begin{equation}
\partial\Delta\left(  V_{k}\right)  \equiv\partial\Delta\left(  v_{i_{0}%
},...,v_{i_{s}}\right)  =\sum_{j=0}^{s}\left(  -1\right)  ^{j}\Delta\left(
v_{i_{0}},v_{i_{1}},...,\widehat{v_{i_{j}}}^{j},...,v_{i_{s}}\right)  .
\label{10}%
\end{equation}
The meaning is that if the vertex removed $v^{G}$ happens to be at the odd
position in the ordered string $\overrightarrow{v_{i_{0}},...,v_{i_{s}}}$
(i.e. $j=0,2,4,...$) defining the (oriented) simplex $\Delta\left(
V_{k}\right)  ,$ then it has just to be removed from it, keeping the order of
the remaining vertices as defining the orientation of the simplex
$\Delta\left(  V_{k}^{\prime}\right)  $; otherwise, the removal of $v^{G}$ has
to be supplemented by the transposition of, say, the first two of the
remaining vertices defining $\Delta\left(  V_{k}^{\prime}\right)  .$ If
$V_{k}^{\prime}$ happens to be a singleton, then the orientation of
$\Delta\left(  V_{k}^{\prime}\right)  $ is the sign it gets in $\left(
\ref{10}\right)  .$ In case of a product of simplices one has to use the
Leibniz rule:%
\begin{align*}
&  \partial\left[  \Delta\left(  V_{1}\right)  \times...\times\Delta\left(
V_{r}\right)  \right] \\
&  =\sum_{k=1}^{r}\left(  -1\right)  ^{\dim\left(  \Delta\left(  V_{1}\right)
\times...\times\Delta\left(  V_{k-1}\right)  \right)  }\Delta\left(
V_{1}\right)  \times...\times\Delta\left(  V_{k-1}\right)  \times
\partial\Delta\left(  V_{k}\right)  \times...\times\Delta\left(  V_{r}\right)
.
\end{align*}

Now we are going to make the choice of orientation for every face $F.$ First,
let us take the face $F_{0}=F\left(  \left\{  v_{0}\right\}  ,\left\{
v_{1}\right\}  ,...,\left\{  v_{r-2}\right\}  ,\left\{  v_{r-1},...,v_{N}%
\right\}  \right)  ,$ which corresponds to the partition into $r-1$ singletons
and the remaining set of $N-r+2$ points; this face is $\left(  N-r+1\right)
$-dimensional simplex. We choose its orientation according to the order
$v_{r-1}<...<v_{N},$ while all singletons get the $+1$ orientation:
\[
F_{0}=F\left(  \left\{  \overset{+}{v_{0}}\right\}  ,\left\{  \overset
{+}{v_{1}}\right\}  ,...,\left\{  \overset{+}{v_{r-2}}\right\}  ,\left\{
\overrightarrow{v_{r-1},...,v_{N}}\right\}  \right)  .
\]

Let now $F=F\left(  \left\{  \overrightarrow{v_{1,1},...,v_{1,i_{1}}}\right\}
,\left\{  \overrightarrow{v_{2,1},...,v_{2,i_{2}}}\right\}  ,...,\left\{
\overrightarrow{v_{r,1},...,v_{r,i_{r}}}\right\}  \right)  $ be an arbitrary
face, with orientation corresponding to the orders $v_{1,1}<...<v_{1,i_{1}},$
$v_{2,1}<...<v_{2,i_{2}},$ $...,$ $v_{r,1}<...<v_{r,i_{r}}$ on its factors,
and $\left(  +\right)  $\textit{-signs of singletons}. We will formulate the
condition this orientation of $F$ has to satisfy, in order to define the
global $\mathcal{O}$-orientation of $Y^{N,r}$. Let us correspond to this
orientation the string $S\left(  F\right)  $ of $N+r$\textbf{ }symbols, which
are letters $\left\{  v_{0},...,v_{N}\right\}  \cup\left\{  s_{1}%
,...,s_{r-1}\right\}  .$ The string $S\left(  F\right)  $ is the following:%
\[
S\left(  F\right)  =v_{1,1},...,v_{1,i_{1}},s_{1},v_{2,1},...,v_{2,i_{2}%
},s_{2},...,s_{r-1},v_{r,1},...,v_{r,i_{r}}.
\]
The only condition the orientation on $F$ has to satisfy, is that the parity
of the permutation $\pi\left(  F\right)  \in S_{N+r}$, which takes $S\left(
F\right)  $ into the string%
\[
S\left(  F_{0}\right)  =v_{0},s_{1},v_{1},s_{2},...,s_{r-1},v_{r-1}%
,v_{r},...,v_{N},
\]
is even.

Let us check that the orientations of faces thus prescribed, form an
$\mathcal{O}$-orientation.

Let us start with the cuboctahedron $\mathcal{C}.$ Of course, $\mathcal{C}$ is
orientable, but its orientation is not an $\mathcal{O}$-orientation. Its
$\mathcal{O}$-orientation is easy to guess: it is comprised by orienting all
squares one way, and all triangles -- the opposite way. Let us check that the
$\mathcal{O}$-recipe above gives the same result.

We are talking about the 2D complex $Y^{3,2},$ built from the simplex
$\Delta_{3}=\left\{  0,1,2,3\right\}  .$ Let us take the (triangular) face
$F_{0}=\left(  \left\{  0\right\}  ,\left\{  1,2,3\right\}  \right)  $ and one
of its boundary edges, say, the segment $G=\left(  \left\{  0\right\}
,\left\{  1,3\right\}  \right)  .$ The vertex $v^{G}$ is the point $\left\{
2\right\}  ,$ missing in $G$. The segment $G$ is adjacent to another 2D face,
the square face $F_{1}=\left(  \left\{  0,2\right\}  ,\left\{  1,3\right\}
\right)  .$

The orientation of $F_{0}$ is the $\left(  +1\right)  $ orientation of the
point $\left\{  0\right\}  $ and the orientation of the simplex $\Delta\left(
1,2,3\right)  ,$ corresponding to the order $1<2<3,$ $F_{0}=\left(  \left\{
\overset{+}{0}\right\}  ,\left\{  \overrightarrow{1,2,3}\right\}  \right)  .$
Let us check which orientation the face $F_{1}$ gets via our recipe above:
$\left(  \left\{  \overrightarrow{0,2}\right\}  ,\left\{  \overrightarrow
{1,3}\right\}  \right)  $ or $\left(  \left\{  \overrightarrow{0,2}\right\}
,\left\{  \overrightarrow{3,1}\right\}  \right)  .$ To find it, let us look at
the parity of the, say, first permutation:%
\[
\left(
\begin{array}
[c]{ccccc}%
0 & s_{1} & 1 & 2 & 3\\
0 & 2 & s_{1} & 1 & 3
\end{array}
\right)  .
\]
Since it is even, we take for the $\mathcal{O}$-orientation of the face
$F_{1}$ the one defined by the ordering $\left(  \left\{  \overrightarrow
{0,2}\right\}  ,\left\{  \overrightarrow{1,3}\right\}  \right)  .$

The orientation that the edge $G=\left(  \left\{  0\right\}  ,\left\{
1,3\right\}  \right)  $ inherits from the oriented face $F_{0}=\left(
\left\{  \overset{+}{0}\right\}  ,\left\{  \overrightarrow{1,2,3}\right\}
\right)  ,$ corresponds to the ordering $G=\left(  \left\{  \overset{+}%
{0}\right\}  ,\left\{  \overrightarrow{3,1}\right\}  \right)  ,$ because the
index $j$ of the formula $\left(  \ref{10}\right)  $ has value $1;$ the
corresponding term there is $\left(  -1\right)  \Delta\left(  1,\widehat
{2},3\right)  $. The orientation the edge $G=\left(  \left\{  0\right\}
,\left\{  1,3\right\}  \right)  $ inherits from the oriented face
$F_{1}=\left(  \left\{  \overrightarrow{0,2}\right\}  ,\left\{
\overrightarrow{1,3}\right\}  \right)  ,$ corresponds to the ordering
$G=\left(  \left\{  \overset{-}{0}\right\}  ,\left\{  \overrightarrow
{1,3}\right\}  \right)  ,$ with the $-1$ sign for the vertex $\left\{
0\right\}  $ since $\partial\Delta\left(  0,2\right)  =+\left(  2\right)
-\left(  0\right)  .$ So the two orientations of $G$ do agree. The check for
other edges is done in the same way.

The general case is obtained by induction.
\end{proof}

\section{Beyond the prime case}

The start of the next chapter of TTT came with the proof of TTT for $r$ being
prime power. It was done in \cite{O} -- a famous paper left unpublished. So I
have learned about this fact much later, from an independent paper by
Volovikov, \cite{V}.

Thus, the general case of TTT seemed to be within reach. Yet, nobody was able
to make the final step. Some people even expressed doubts. The one case I know
was in 2011, when I was discussing TTT with David Kazhdan. But I was not
convinced. Bad luck! -- because four years later Florian Frick came with
counterexamples to TTT, in \cite{F}. He has shown that for any $r$ not a prime
power the TTT does not hold...

That was the end of one chapter of TTT. Meanwhile, many other open questions
around TTT have appeared, see e.g. \cite{BBZ} -- but this is another story.

\section{Credits}

In the report \cite{S} one reads:

\textit{For the counterexample, papers ... by M. Ozaydin, M. Gromov, P.
Blagojevic, F. Frick, G. Ziegler, I. Mabillard and U. Wagner are important.}

This is a correct statement, since the paper \cite{F} appears in this list.
But this statement is an understatement; it is the truth, but not the whole
truth: -- without \cite{F} there would be no counterexample. In general, it
never happens that a proof of a theorem is contained in several papers by
several authors. Usually, there exists a pivotal paper, such that the proof in
question does not exists before it, and does exist after it. In the case of
counterexample to TTT such a paper was written by Florian Frick, building on
earlier results of Mabillard and Wagner.

The result of Frick is based on the constraint method, developed in 2015 in
\cite{BFZ}. Again, the paper \cite{S} credits M. Gromov for the discovery of
this method earlier, in his 2010 paper \cite{G}. This monumental paper of
Mikhail Leonidovich Gromov is indeed an outstanding work. Yet, crediting this
(110-pages-)paper for the constraint method (referring to a sketch in the
discussion section there) is rather detrimental to this paper than otherwise.
This half-page sketch of a proof is incomplete and contains typos, so much so
that an attempt of correcting them is made in \cite{S}. Out of respect to
Mikhail Gromov it would have been better not to initiate this discussion.

The mathematical part of A. Skopenkov review is written nicely, so I recommend
it to the readership of UMN for a clear introduction to TTT.

\begin{acknowledgement}
1. I am grateful to Oleg Ogievetsky for his permission to reproduce our joint
results on $\mathcal{O}$-orientation in this paper.

2. The results on $\mathcal{O}$-orientation were obtained in CPT, the other
results on the topology of the complex $Y^{N,r}$ -- in IITP. I thank both
institutions for the stimulating working environment.
\end{acknowledgement}

\end{document}